\newcommand{\erdos}{Erd\H{o}s}
\newcommand{\real}{\mathbb{R}}
\newtheorem{defn_monotonic}{Definition}
\newtheorem{defn_mono_in_set}[defn_monotonic]{Definition}
\newtheorem{propn_nonstrict_set_bound}[defn_monotonic]{Proposition}
\newtheorem{defn_strict_monotonic}[defn_monotonic]{Definition}
\newtheorem{defn_strict_mono_in_set}[defn_monotonic]{Definition}
\newtheorem{thm_strict_set_bound}[defn_monotonic]{Theorem}
\newtheorem{thm_strict_seq_bound}[defn_monotonic]{Theorem}
\newtheorem{esconstruction}[defn_monotonic]{Lemma}
\newtheorem{sesconstruction}[defn_monotonic]{Lemma}
\newtheorem{defnintersectingpoints}[defn_monotonic]{Definition}
\newtheorem{thm_crossintersecting}[defn_monotonic]{Theorem}
\newtheorem{defnintersectingflats}[defn_monotonic]{Definition}
\newtheorem{defnminimalflats}[defn_monotonic]{Definition}
\newtheorem{intersectingflats}[defn_monotonic]{Lemma}
\newtheorem{intersectinggridsubset}[defn_monotonic]{Lemma}
\newtheorem{minantichainsize}[defn_monotonic]{Lemma}
\newtheorem{defnpowerfunction}[defn_monotonic]{Definition}
\newtheorem{defndomination}[defn_monotonic]{Definition}
\newtheorem{defnstable}[defn_monotonic]{Definition}
\newtheorem{stableupperbound}[defn_monotonic]{Theorem}
\newtheorem{stableupperbound3}[defn_monotonic]{Theorem}
\newtheorem{internallystableconstr}[defn_monotonic]{Theorem}
\begin{document}
\title{Strictly monotonic multidimensional sequences and stable sets in pillage games}

\author{David Saxton}
% \ead{D.Saxton@dpmms.cam.ac.uk}
% \address{Department of Pure Mathematics and Mathematical Statistics, University of Cambridge,
% Cambridge CB3 0WB, England}

\maketitle

\begin{abstract}
Let $S \subset \real^n$ have size $|S| > \ell^{2^n-1}$. We show that there are distinct points $\{x^1, \ldots, x^{\ell+1}\} \subset S$ such that for each $i \in [n]$, the coordinate sequence $(x^j_i)_{j=1}^{\ell+1}$ is strictly increasing, strictly decreasing, or constant, and that this bound on $|S|$ is best possible.
This is analogous to the \erdos-Szekeres theorem on monotonic sequences in $\real$.

We apply these results to bound the size of a stable set in a pillage game.

We also prove a theorem of independent combinatorial interest. Suppose $\{a^1,b^1,\ldots,a^t,b^t\}$ is a set of $2t$ points in $\real^n$ such that the set of pairs of points not sharing a coordinate is precisely $\{\{a^1,b^1\},\ldots,\{a^t,b^t\}\}$. We show that $t \leq 2^{n-1}$, and that this bound is best possible.
\end{abstract}

% \maketitle

\section{Introduction}

The main theorem of this paper is Theorem \ref{thm_strict_set_bound}, which concerns the existence of strictly monotonic sequences in $\real^n$ (for some definition of strictly monotonic). The proof of Theorem \ref{thm_strict_set_bound} also requires Theorem~\ref{thm_crossintersecting}, a theorem of independent interest. Section \ref{secn_pillage_games} describes an application of our results to stable sets in pillage games (this was the original motivation for Theorem \ref{thm_strict_set_bound}). We begin by giving some background.

\subsection{Non-strict monotonicity}
\label{secn_nonstrict}

A theorem of \erdos\ and Szekeres \cite{szekeres} tells us that within a sequence of $ab + 1$ real numbers, we can always find a monotonically increasing subsequence of length $a+1$ or a monotonically decreasing subsequence of length $b+1$. The bound $ab + 1$ is best possible, as can be seen by considering the sequence
\begin{equation}
 \label{real_sequence_es}
 (b,b-1,\ldots,1,2b,2b-1,\ldots,b+1,\ldots,ab,ab-1,\ldots,(a-1)b+1).
\end{equation}
The original proof of \erdos\ and Szekeres used geometrical reasoning. One can also deduce it from Dilworth's theorem (or an immediate corollary of it; see Lemma \ref{minantichainsize}) by considering a partial order where $x \leq y$ in the partial order if $x \leq y$ and $x$ occurs before $y$ in the sequence. Then a chain in this partial order corresponds to an increasing subsequence and an antichain corresponds to a decreasing subsequence. We also give a distinct proof due to Seidenberg \cite{seidenberg} below.

\begin{defn_monotonic}
 A sequence of points $(x^j)_{j=1}^{\ell}$ with $x^j \in \real^n$ is \emph{monotonic in direction} $c \in \{-1,1\}^n$ if for each $i \in [n]$, the $i$th coordinate sequence $(x_i^j)_{j=1}^\ell$ is (not necessarily strictly) decreasing or increasing according to whether $c_i = -1$ or $c_i = 1$ respectively.
\end{defn_monotonic}

We will sometimes omit the direction, so a monotonic sequence in $\real^n$ is one that is monotonic in some direction.

\begin{defn_mono_in_set}
 A set $S \subset \real^n$ contains a monotonic sequence of length $\ell$ (in direction $c$) if there are distinct points $\{x^1,\ldots,x^\ell\} \subset S$ such that the sequence $(x^j)_{j=1}^\ell$ is monotonic (in direction $c$).
\end{defn_mono_in_set}

There is a rough equivalence between sequences in $\real$ and sets in $\real^2$. A set in $\real^2$ can be ordered by the first coordinate (making an arbitrary choice of ordering when two points share a first coordinate) and projected in the second coordinate to get a sequence in $\real$. Conversely, a sequence of real numbers $(x^j)_{j=1}^\ell$ can be mapped to a set in $\real^2$ via $x^j \mapsto (j,x^j)$. These generalize to a rough equivalence between sequences in $\real^{n-1}$ and sets in $\real^n$. The \erdos-Szekeres theorem thus gives conditions guaranteeing a monotonic sequence in a set in $\real^2$.

If $(x^1,\ldots,x^\ell)$ is a monotonic sequence in direction $c$, then $(x^\ell,\ldots,x^1)$ is a monotonic sequence in direction $-c$, so for sequences in sets, we only need to consider one of $c$ or $-c$. With this in mind, define
$$ C_n = \{ c \in \{-1,1\}^n : c_1 = 1 \}, $$
and enumerate this set as $C_n = \{c^1,\ldots,c^{2^{n-1}}\}$.

The generalization of the \erdos-Szekeres theorem to $\real^n$ is as follows.

\begin{propn_nonstrict_set_bound}
 \label{propn_nonstrict_set_bound} (Non-strict monotonicity.)
Let $\ell_i \geq 1$ for $1 \leq i \leq 2^{n-1}$, and let $S \subset \real^n$ have size
\[ |S| > \prod_{i = 1}^{2^{n-1}} \ell_i. \]
Then $S$ contains a monotonic sequence of length $\ell_i+1$ in some direction $c^i$.
% Then for some $i$ there are points $\{x^1,\ldots,x^{\ell_i+1}\} \subset S$ such that $(x^j)_{j=1}^{\ell_i+1}$ is a monotonic sequence in direction $c^i$.
\end{propn_nonstrict_set_bound}

The bound in Proposition \ref{propn_nonstrict_set_bound} is best possible; we give a construction of size $\prod_i \ell_i$ containing no such sequence in Section \ref{secn_nonstrict_constr}. The proposition was proved by De Bruijin \cite{multidim} in the case $\ell_i = \ell$ for all $i$ by using $n-1$ applications of the \erdos-Szekeres theorem. For non-constant $\ell_i$, Proposition \ref{propn_nonstrict_set_bound} can be proved by a counting argument of Seidenberg \cite{seidenberg}, which we give below.

\begin{proof}
Let $S \subset \real^n$, $|S| = t$. Order the points by the first coordinate (so we consider $S$ as a sequence of points $(x^j)_{j=1}^t$ in $\real^n$). Assign each position $j, 1 \leq j \leq t$, a $2^{n-1}$-tuple of numbers $(r_1^j,\ldots,r_{2^{n-1}}^j)$, where $r_i^j$ is the maximum length of a subsequence in $S$ in direction $c^i$ ending at $x^j$. Then no tuple of numbers is repeated: given sequence positions $1 \leq j < k \leq t$, the point $x^k$ must lie in some direction $c^i$ from the point $x^j$. Then the sequence in direction $c^i$ of length $r_i^j$ ending at $x^j$ can be extended to a sequence containing $x^k$, and thus $r_i^j < r_i^k$. If no sequence has length $\ell_i + 1$ then $r_i^j \leq \ell_i$ for all $j$, and so by distinctness of the tuples, we have $|S| = t \leq \prod_i \ell_i$.
\end{proof}

\subsection{Strict monotonicity}
\label{secn_strict_sets}

Suppose we wish to find a strictly increasing, strictly decreasing or constant subsequence in a sequence in $\real$ (we must allow constant subsequences). The Seidenberg counting argument shows that a sequence in $\real$ with no such subsequence of length $\ell+1$ has maximum length $\ell^3$. This is best possible; consider the example (\ref{real_sequence_es}) with each $x$ replaced by $\ell$ consecutive copies of $x$.

\begin{defn_strict_monotonic}
 A sequence of points $(x^j)_{j=1}^{\ell}$ with $x^j \in \real^n$ is \emph{strictly monotonic in direction} $d \in \{-1,0,1\}^n$ if for each $i \in [n]$, the $i$th coordinate sequence $(x_i^j)_{j=1}^\ell$ is strictly decreasing, constant, or strictly increasing according to whether $d_i = -1$, $0$ or $1$ respectively.
\end{defn_strict_monotonic}

As before, we will sometimes omit the direction when talking about strictly monotonic sequences.

\begin{defn_strict_mono_in_set}
 A set $S \subset \real^n$ contains a strictly monotonic sequence of length $\ell$ (in direction $d$) if there are distinct points $\{x^1,\ldots,x^\ell\} \subset S$ such that the sequence $(x^j)_{j=1}^\ell$ is strictly monotonic (in direction $d$).
\end{defn_strict_mono_in_set}

We need consider only one of each $d$ or $-d$ for each $d \in \{-1,0,1\}^n$, so define
$$ D_n = \{ d \in \{-1,0,1\}^n : d \neq (0,\ldots,0), d_{i_0} = 1 \mbox{ where } i_0 = \min \{ i : d_i \neq 0 \} \}. $$

Consider a set $S \subset \real^n$. If we order the points by a coordinate to get a sequence $S'$ in $\real^{n-1}$, then a \emph{strictly} monotonic subsequence of $S'$ does not necessarily correspond to a strictly monotonic sequence in $S$ (it now matters what happens to points sharing the coordinate that we order by). Thus one cannot apply the counting argument of Seidenberg to bound the size of a set $S \subset \real^n$ with no strictly monotonic sequence of length $\ell+1$. Further, even if we start with a sequence in $\real^n$, the counting argument only gives a bound of $\ell^{3^n}$, which is far from best possible. Thus we need new techniques to work with strict monotonicity.

\subsection{Strict monotonicity in sets}

Suppose we wish to construct a large set in $\real^n$ with no strictly monotonic sequence of length $\ell+1$. Call such a set a \emph{good} set. Here we describe a natural construction which is in fact largest possible (see Section \ref{secn_strict_constr} for the exact construction).
As mentioned for Proposition \ref{propn_nonstrict_set_bound}, there is a set $E_n \subset \real^n$ of size $|E_n| = \ell^{2^{n-1}}$ with no monotonic sequence of length $\ell+1$, and so it is certainly good. In fact the construction in Section \ref{secn_nonstrict_constr} for $E_n$ contains no pair of points that share a coordinate for any coordinate position. Suppose $F_n$ is another good set such that every pair of points in $F_n$ share a coordinate in some coordinate position. If we replace each point in $E_n$ with a very small copy of $F_n$ to get a new set $G_n$ ($G_n$ is the ``product'' of $E_n$ and $F_n$; this is made more precise in Section \ref{secn_strict_constr}), then any strictly monotonic sequence in $G_n$ must either have all the coordinate sequences non-constant (thus taking at most one point from each copy of $F_n$), or it must lie strictly inside some fixed copy of $F_n$. In the first case, it corresponds to some monotonic sequence in $E_n$, and thus has length at most $\ell$. In the second case it has length at most $\ell$ since $F_n$ is good. Thus $G_n$ is also good. One candidate for $F_n$ is given by the recursive definition $F_1 = \{0\}$ and $F_n = f_n(G_{n-1})$, where $f_n : (x_1,\ldots,x_{n-1}) \mapsto (x_1,\ldots,x_{n-1},0)$. This recursive construction then gives $|G_n| = |E_n| |F_n| = \ell^{2^{n-1}} |G_{n-1}| = \ell^{2^n-1}$. Our main theorem shows that this is in fact best possible.

Now let $(\ell_d)_{d \in D_n}$ be a collection of maximal lengths with $\ell_d \geq 2$ for all $d$. We will show that the maximum size of a set with no strictly monotonic sequence in direction $d$ of length $\ell_d + 1$ for all $d$ is essentially the same recursive construction, with suitable choices at each stage to maximize the size of the set produced.

Let the function
$$ N : \{-1,0,1\}^n \rightarrow \mathcal{P}[n] $$
give the set of positions of the non-zero coordinates. Note that $|\{d \in D_n : N(d) = I\}| = 2^{|I|-1}$. Define $(m_I)_{I \subset [n]}$ and $(\lambda_I)_{I \subset [n]}$ via $\lambda_\emptyset = 1$ and
\begin{equation}
m_I = \prod_{d \in D_n : N(d) = I} \ell_d, \label{defn_mI}
\end{equation}
\begin{equation}
 \lambda_I = m_I \cdot \max_{i \in I} \lambda_{I \setminus \{i\}}. \label{defnlambda}
\end{equation}
For $I \subset [n]$, $\lambda_I$ should be thought of (this will be shown) as the maximum size of a good set $S$ when the coordinates of the points can only vary in $I$ (for all $x,y \in S$, $x_i = y_i$ for $i \notin I$). Similarly, $m_I$ should be thought of as the maximum size of a good set $S$ when the coordinates can only vary in $I$, and no two points share a common coordinate from $I$ (for all $x,y \in S$, $x_i = y_i$ if and only if $i \notin I$).

If $\ell_d = \ell$ for all $d \in D_n$, then $m_I = \ell^{2^{|I|-1}}$, $\lambda_{I} = \ell^{2^{|I|}-1}$ and $\lambda_{[n]} = \ell^{2^n-1}$.
% We are interested in the value of $\lambda_{[n]}$. In particular, if $\ell_d = \ell$ for all $d \in D_n$, then $m_I = \ell^{2^{|I|-1}}$, $\lambda_{I} = \ell^{2^{|I|}-1}$ and $\lambda_{[n]} = \ell^{2^n-1}$.

\begin{thm_strict_set_bound}
 \label{thm_strict_set_bound} (Strict monotonicity in sets.)
Let $(\ell_d)_{d \in D_n}$ satisfy $\ell_d \geq 2$ for all $d$, and let $\lambda_{[n]}$ be as above. Let $S \subset \real^n$ have size
\[
|S| > \lambda_{[n]}.
\]
Then $S$ contains a strictly monotonic sequence of length $\ell_d+1$ in some direction $d \in D_n$.
% Then for some $d \in D_n$ there are points $\{x^1,\ldots,x^{\ell_d+1}\} \subset S$ such that $(x^j)_{j=1}^{\ell_d+1}$ is a strictly monotonic sequence in direction $d$.

In particular, if $|S| > \ell^{2^n-1}$ then $S$ contains a strictly monotonic sequence of length $\ell+1$.
\end{thm_strict_set_bound}

Theorem \ref{thm_strict_set_bound} is best possible; we give a construction of size $\lambda_{[n]}$ with no such sequence in Section \ref{secn_strict_constr}.
If we do not impose $\ell_d \geq 2$ for all $d$, then $\lambda_{[n]}$ may not be a correct bound. For example, with $n=3$, take the collection
$\ell_{(0,1,-1)} = \ell_{(1,0,-1)} = \ell_{(1,-1,0)} = 2$, and
$\ell_d = 1$ otherwise.
Then $\lambda_{[n]} = 2$, but
$ \{(1,0,0),(0,1,0),(0,0,1)\} $
is a good set of size 3.

\subsection{Strict monotonicity in sequences}
\label{secn_strict_monotonicity_in_sequences}

Theorem \ref{thm_strict_set_bound} bounds how large a \emph{set} can be without containing a strictly monotonic sequence. We would like an analogue of this theorem for sequences. Unlike the non-strict case, such an analogue is not a triviality.

Let $S = (x^j)_{j=1}^{|S|}$ be a sequence of points in $\real^n$. Each of the $3^n$ directions in $\{-1,0,1\}^n$ are now non-equivalent for the purposes of the existence of a subsequence in this direction. Suppose for each $d \in \{-1,0,1\}^n$, we forbid a subsequence of length $\ell_d + 1$ in direction $d$. Map the sequence $S$ to a set $T \subset \real^{n+1}$ as described in Section \ref{secn_nonstrict}, i.e., $x^j \mapsto (j,x^j)$. The set of maximum lengths for $T$ is now $(\ell_d^*)_{d \in D_{n+1}}$, where
\begin{equation*}
 \ell_{(d_0,d_1,\ldots,d_n)}^* =
\begin{cases}
 \ell_{(d_1,\ldots,d_n)} & \text{ if } d_0 = 1 \\
 1 & \text{otherwise (i.e., }d_0 = 0).
\end{cases}
\end{equation*}
Define $(\lambda_{I}^*)_{I \subset [n+1]}$ as in Section \ref{secn_strict_sets} for the lengths $(\ell_d^*)_{d \in D_{n+1}}$. We would like to apply Theorem \ref{thm_strict_set_bound}, but we cannot do this as stated, since we do not have $\ell_d^* \geq 2$ for all $d$. However, we will show that the proof of Theorem \ref{thm_strict_set_bound} still applies for this special case.

\begin{thm_strict_seq_bound}
 \label{thm_strict_seq_bound} (Strict monotonicity in sequences.)
Let $(\ell_d)_{d \in \{-1,0,1\}^n}$ satisfy $\ell_d \geq 2$ for all $d$, and let $\lambda_{[n+1]}^*$ be as above. Let $S$ be a sequence in $\real^n$ of length
\[ |S| > \lambda_{[n+1]}^*. \]
Then $S$ contains a strictly monotonic subsequence of length $\ell_d+1$ in some direction $d \in \{-1,0,1\}^n$.

In particular, if $|S| > \ell^{2^{n+1}-1}$ then $S$ contains a strictly monotonic subsequence of length $\ell+1$.
\end{thm_strict_seq_bound}

Theorem \ref{thm_strict_seq_bound} is best possible. The construction $G$ given in Section \ref{secn_strict_constr} for the set of lengths $(\ell_d^*)_{d \in D_{n+1}}$ has the property that no two points share the same first coordinate, so $G$ can be ordered by the first coordinate and projected in the remaining $n$ coordinates to get a sequence of points in $\real^n$.

\section{Lower bound constructions}
\label{secn_lower_bound_constrn}

\subsection{Construction for non-strict monotonicity}
\label{secn_nonstrict_constr}

Let $(\ell_i)_{i=1}^{2^{n-1}}$ be a collection of maximum lengths for the set of directions $C_n$. We will construct a set of size $\prod_i \ell_i$ with no sequence of length $\ell_i + 1$ in direction $c^i$, for all $1 \leq i \leq 2^{n-1}$. This shows the value appearing in Proposition \ref{propn_nonstrict_set_bound} is best possible.

For convenience, write
$$ L_k = \prod_{i=1}^k \ell_i $$
(and $L_0 = 1$). Define, for a set $A \subset \real^n$ and a vector $s \in \real^n$, the set translation
$$ A + s = \{ x + s : x \in A \}. $$
Define recursively, for $1 \leq k \leq 2^{n-1}$, the following collection of sets in $\real^n$.
\begin{align*}
 A_0 &= \{0\} \\
 A_{k,m} &= A_{k-1} + m L_{k-1} c^k\text{, \ for }0 \leq m \leq \ell_k - 1. \\
 A_k &= A_{k,0} \cup \cdots \cup A_{k,\ell_k-1}
\end{align*}
For $A \subset \real^n$, define
$$ w(A) = \max_{x,y \in A} ||x - y||_\infty, $$
where $||z||_\infty = \max_{1 \leq i \leq n} |z_i|$.

\begin{esconstruction}
 \label{esconstruction} (Properties of $A_k$)
\begin{enumerate}[label=(i)]
\item $A_k \subset \mathbb{Z}^n$,
\item $w(A_k) \leq L_k - 1$,
\item $|A_k| = L_k$,
\item for each $i$, $1 \leq i \leq k$, $A_k$ does not have a monotonic sequence of length $\ell_i+1$ in the direction $c^i$. Further $A_k$ does not have any non-trivial monotonic sequences in any of the directions $c^{k+1}, \ldots, c^{2^{n-1}}$.
\end{enumerate}
\end{esconstruction}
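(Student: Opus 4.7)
The plan is to prove all four properties jointly by induction on $k$. The base case $k = 0$ is immediate: $A_0 = \{0\}$ and $L_0 = 1$, and a one-point set supports no non-trivial monotonic sequences at all.

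For the inductive step, I would dispatch (i)--(iii) with routine calculations. Property (i) follows because $c^k \in \{-1,1\}^n$ and $m L_{k-1} \in \mathbb{Z}$, so each $A_{k,m}$ is an integer translate of $A_{k-1}$. For (ii), writing $x = x' + m L_{k-1} c^k$ and $y = y' + m' L_{k-1} c^k$ with $x', y' \in A_{k-1}$, the triangle inequality gives $||x - y||_\infty \leq (L_{k-1} - 1) + (\ell_k - 1)L_{k-1} = L_k - 1$. For (iii), I first establish pairwise disjointness of the blocks $A_{k,m}$: if two translates coincided for $m \neq m'$, one would obtain $x', y' \in A_{k-1}$ with $||x' - y'||_\infty = |m - m'| L_{k-1} \geq L_{k-1}$, contradicting the inductive bound $w(A_{k-1}) \leq L_{k-1} - 1$; hence $|A_k| = \ell_k |A_{k-1}| = L_k$.

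The heart of the argument is (iv). Given a monotonic sequence in $A_k$ in direction $c^i$, I would partition its terms according to the block $A_{k,m}$ containing each point. The key geometric observation is that for every coordinate $j$, the $j$-th coordinates of $A_{k,m}$ lie in an interval of width at most $L_{k-1} - 1$, while the intervals for consecutive blocks are offset by exactly $L_{k-1} c^k_j$. Because $L_{k-1}$ strictly exceeds the intra-block spread, any step from a point of $A_{k,m}$ to a point of $A_{k,m'}$ with $m' > m$ strictly moves the $j$-th coordinate in direction $c^k_j$, for every $j$ simultaneously. If $i \neq k$, choosing a coordinate $j$ with $c^i_j \neq c^k_j$ shows such a cross-block step violates monotonicity in direction $c^i$, so the sequence must lie entirely inside a single $A_{k,m}$; induction then caps its length at $\ell_i$ when $i < k$ and at $1$ when $i > k$. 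For $i = k$, the inductive non-triviality clause forbids a non-trivial sequence in direction $c^k$ within a single block, so the sequence uses at most one point from each block, giving length at most $\ell_k$.

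The main obstacle is articulating the block-separation step in (iv) cleanly; once that observation is in hand, the rest is just bookkeeping driven by the inductive hypothesis.
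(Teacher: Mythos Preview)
Your proposal is correct and follows essentially the same induction as the paper: parts (i)--(iii) are handled identically via integer translates, the triangle inequality, and the disjointness argument from $w(A_{k-1}) \leq L_{k-1}-1$, and part (iv) rests on the same key observation that any step between distinct blocks $A_{k,m}$ and $A_{k,m'}$ is strictly monotone in direction $c^k$, forcing a sequence in direction $c^i$ with $i\neq k$ to stay within one block while a sequence in direction $c^k$ uses at most one point per block. Your write-up is in fact slightly more explicit than the paper's in separating the cases $i<k$, $i=k$, and $i>k$, but the underlying argument is the same.
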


\begin{proof}
 (i) This is immediate from the construction.

 (ii) We proceed by induction on $k$. The statement is true for $A_0$. Let $x,y \in A_k$, say $x = x_0 + m_1 L_{k-1} c^k$, $y = y_0 + m_2 L_{k-1} c^k$, where $x_0, y_0 \in A_{k-1}$. Then
\begin{align*}
 ||x - y||_\infty & \leq ||x_0 - y_0|| + |m_1 L_{k-1} - m_2 L_{k-1}| \cdot || c^k ||_\infty \\
 & \leq L_{k-1} - 1 + (\ell_k-1) L_{k-1} \\
 & = L_k - 1.
\end{align*}

(iii) We proceed by induction on $k$. The statement is true for $A_0$. It is sufficient to show that $A_{k,m_1} \cap A_{k,m_2} = \emptyset$ for $m_1 \neq m_2$. Then, $|A_k| = \ell_k |A_{k-1}|$ and we are done. Indeed, suppose $x \in A_{k,m_1} \cap A_{k,m_2}$ for $m_1 \neq m_2$. Then $x - m_1 L_{k-1} c^k \in A_{k-1}$ and $x - m_2 L_{k-1} c^k \in A_{k-1}$. Hence
\begin{align*}
 w(A_{k-1}) & \geq || (x - m_1 L_{k-1} c^k) - (x - m_2 L_{k-1} c^k) ||_\infty \\
 & = |m_1-m_2| L_{k-1} \\
 & > L_{k-1} - 1,
\end{align*}
contradicting (ii).

(iv) We proceed by induction on $k$. The statement is true for $A_0$. Suppose $x \in A_{k,m_1}$, $y \in A_{k,m_2}$ with $m_1 < m_2$. Then again by (ii) we have that $(x,y)$ is a sequence in direction $c^k$. Therefore, if we have a sequence of points inside $A_k$, then either it must lie entirely inside an $A_{k,m_0}$ for some $m_0$ in direction $c^i$ for some $i < k$ (and thus have length at most $\ell_i$ by the inductive hypothesis); otherwise it lies in direction $c^k$ and can take at most one point from each $A_{k,m}, 0 \leq m \leq \ell_k-1$.
\end{proof}

Properties (iii) and (iv) of Lemma \ref{esconstruction} show that we can take the set $A_{2^{n-1}}$ as our construction.

\subsection{Construction for strict monotonicity}
\label{secn_strict_constr}

We construct sets showing that the bound is best possible for Theorem \ref{thm_strict_set_bound} (and Theorem \ref{thm_strict_seq_bound}) by induction on $n$. The case $n=1$ is simply $\ell_d$ distinct points in $\real$ for the unique $d \in D_1$.

Let $(\ell_d)_{d \in D_n}$, $(\lambda_I)_{I \subset [n]}$ be as in Theorem~\ref{thm_strict_set_bound}. In this section, a sequence will mean a strictly monotonic sequence. The set we construct will have size $\lambda_{[n]}$ with no sequence of length $\ell_d + 1$ in direction $d$ for all $d \in D_n$.

In equation (\ref{defn_mI}), $\{ d \in D_n : N(d) = [n] \} = C_n$, and so
$$ m_{[n]} = \prod_{c \in C_n} \ell_c. $$
Let $i_0 \in [n]$ be such that
$$ \lambda_{[n]} = m_{[n]} \lambda_{[n] \setminus \{i_0\} }. $$
Construct a new set of lengths $(\ell_d')_{d \in D_{n-1}}$ via
$$ \ell_d' = \ell_{(d_1, \ldots, d_{i_0-1}, 0, d_{i_0}, \ldots, d_{n-1})}. $$
By the inductive hypothesis with the collection $(\ell_d')_{d \in D_{n-1}}$ there is a set $F \subset \real^{n-1}$ of size $|F| = \lambda_{[n] \setminus \{i_0\}}$, which contains no sequence of length $\ell_d' + 1$ in direction $d$ for all $d \in D_{n-1}$. Let $F'$ be the embedding and scaling of $F$ into $\real^n$,
\begin{align*}
% F'  &= \frac{1}{w(F)+1} F \subset \real^{n-1},\\
% F'' &= \{ (x_1, \ldots, x_{i_0-1}, 0, x_{i_0}, \ldots, x_{n-1} ) : (x_1,\ldots,x_{n-1}) \in F' \} \subset \real^n.
F' &= \{ (1/(w(F)+1))(x_1, \ldots, x_{i_0-1}, 0, x_{i_0}, \ldots, x_{n-1} ) : (x_1,\ldots,x_{n-1}) \in F \}.
\end{align*}
The set $F'$ has no sequences of length $\ell_d + 1$ in direction $d$ for all $d \in D_n \setminus C_n$, and no non-trivial sequences in any direction $d \in C_n$. Further $w(F') < 1$.

Construct another set of lengths $(\ell_i'')_{i=1}^{2^{n-1}}$ via
$$\ell_i'' = \ell_{c^i}.$$
Then as in Section~\ref{secn_nonstrict_constr}, there is a set $E \subset \real^n$ of size $|E| = \prod_i \ell_i'' = \prod_{c \in C_n} \ell_c = m_{[n]}$ with no sequence of length $\ell_c + 1$ in direction $c$ for all $c \in C_n$, and no non-trivial sequence in any direction $d \in D_n \setminus C_n$.

Lemma \ref{sesconstruction} shows that the following construction works.
\begin{equation}
 \label{defnG}
 G = \bigcup_{x \in E} (F' + x)
\end{equation}

\begin{sesconstruction}
 \label{sesconstruction} (Properties of $G$)
\begin{enumerate}[label=(i)]
 \item $|G| = \lambda_{[n]}$.
\item $G$ has no strictly monotonic sequence of length $\ell_d+1$ in direction $d$ for all $d \in D_n$.
\end{enumerate}
\end{sesconstruction}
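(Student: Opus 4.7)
The plan is to establish (i) first via disjointness of the translates and then (ii) by analyzing where a hypothetical long strict sequence can live. For (i), since $E \subset \mathbb{Z}^n$ comes from the non-strict construction of Section \ref{secn_nonstrict_constr}, any two distinct points of $E$ differ by at least $1$ in some coordinate, while $w(F') < 1$. Hence for distinct $x, y \in E$ the ranges of $F' + x$ and $F' + y$ in that coordinate are disjoint intervals, so the translates are disjoint as sets. Therefore $|G| = |E| \cdot |F'| = m_{[n]} \cdot \lambda_{[n] \setminus \{i_0\}} = \lambda_{[n]}$ by the choice of $i_0$.

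For (ii), suppose $(x^1, \ldots, x^k)$ is a strictly monotonic sequence in $G$ in direction $d \in D_n$, and write $x^j = y^j + z^j$ uniquely with $y^j \in E$ and $z^j \in F'$. The crucial observation I will use is that, because $y^j_i \in \mathbb{Z}$ and the fluctuation of $z^j_i$ across $j$ has magnitude at most $w(F') < 1$: if $d_i = \pm 1$, then $(y^j_i)_j$ is non-strictly monotonic in direction $d_i$; and if $d_i = 0$, all $y^j_i$ lie in an interval of length less than $1$, hence equal a common integer. Combined with the fact that any two distinct points of $E$ differ in every coordinate (a consequence, via directions with a single zero, of $E$ having no non-trivial sequence in any direction of $D_n \setminus C_n$), this observation rigidly determines where the sequence can sit.

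The case analysis then splits according to whether $d \in C_n$. If $d \in D_n \setminus C_n$, pick any $i'$ with $d_{i'} = 0$; then $(y^j_{i'})_j$ is constant, and since any two distinct points of $E$ differ in coordinate $i'$, all the $y^j$ must coincide. The sequence therefore lies in a single copy $F' + y$, so after translating we obtain a strict sequence of length $k$ in $F'$ in direction $d$. If $d_{i_0} \neq 0$, this contradicts the constancy of the $i_0$-th coordinate on $F'$ unless $k \leq 1$, and then $k \leq 1 \leq \ell_d$ since $\ell_d \geq 2$. If $d_{i_0} = 0$, one drops the $i_0$-th coordinate to realise this as a strict sequence in $F$ in a direction $d' \in D_{n-1}$ satisfying $\ell'_{d'} = \ell_d$, so $k \leq \ell_d$ by the inductive hypothesis. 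If instead $d \in C_n$, then $d_{i_0} \neq 0$, so each copy $F' + y$ contributes at most one point; the $y^j$ are then distinct and form a non-strict monotonic sequence in $E$ in direction $d \in C_n$, so $k \leq \ell_d$ by the defining property of $E$. The main step to get right is the integer-versus-fractional observation linking strict monotonicity of $(x^j)$ to non-strict monotonicity or constancy of $(y^j)$; everything else then follows from the recursive structure of $E$ and $F$.
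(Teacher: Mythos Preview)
Your proof is correct and follows essentially the same approach as the paper's: disjointness of translates for (i), and the dichotomy ``some $d_i=0$ forces the sequence into a single copy of $F'$; all $d_i\neq 0$ projects to a monotonic sequence in $E$'' for (ii). You are simply more explicit than the paper about the integer-versus-fractional step (which the paper leaves implicit when it asserts that the base points form a sequence in $E$) and about the subcase $d_{i_0}\neq 0$ within $D_n\setminus C_n$, which the paper handles only tacitly via $\ell_d\geq 2$.
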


\begin{proof}
(i) This argument is similar to that for property (iii) in Lemma \ref{esconstruction}. We have that $w(F') < 1$ and $||x - y||_\infty \geq 1$ for all $x,y \in E$, $x \neq y$, since $E \subset \mathbb{Z}^n$. Thus in (\ref{defnG}), $(F' + x) \cap (F' + y) = \emptyset$ for $x \neq y$, and so
$$ |G| = |F| |E| = \lambda_{[n] \setminus \{i_0\} } \cdot m_{[n]} = \lambda_{[n]}. $$

(ii) This argument is similar to that for property (iv) in Lemma \ref{esconstruction}. Let $(y^1, \ldots, y^t)$ be a strictly monotonic sequence in $G$ in direction $d \in D_n$, where $y^j \in F' + x^j$. There are two possibilities for this sequence. If it has a constant coordinate sequence, then the points must lie in some copy of $F'$, i.e., $x^1 = \cdots = x^t = x$ for some $x$, and $d_{i_0} = 0$. Then the construction of $F$ guarantees that $t \leq \ell_d$. Otherwise it has no constant coordinate, and so $(x^1,\ldots,x^t)$ is a sequence in $E$ in direction $d$, and so by construction of $E$, $t \leq \ell_d$.
\end{proof}

\section{Proof of the upper bound}
\label{secn_proof_of_the_upper_bound}

We begin by giving a theorem of independent interest, required for the proof of the main theorem.

\begin{defnintersectingpoints}
 Two points $x,y \in \real^n$ are \emph{intersecting} if they agree in some coordinate, i.e., $x_i = y_i$ for some $i \in [n]$.
\end{defnintersectingpoints}

\begin{thm_crossintersecting}
 \label{thm_crossintersecting}
 Let $\{a^1,b^1\},\ldots,\{a^t,b^t\}$ be a collection of $t$ pairs of points in $\real^d$ such that each pair $a^j,b^j$ is non-intersecting, but all $2t$ points are otherwise pairwise intersecting. Then $t \leq 2^{d-1}$.
\end{thm_crossintersecting}

This bound can be achieved by taking as pairs $\{a^j,b^j\}$ the opposing corners in the d-dimensional cube $\{0,1\}^d$.

Our proof uses exterior algebras, and is reminiscent of a proof of a theorem on intersecting sets given by Alon \cite{alon}. We describe them here briefly; for a comprehensive introduction the reader can consult for example Marcus \cite{marcus}. Given a real $n$-dimensional vector space $V$ with basis $\{e_1,\ldots,e_n\}$, the exterior algebra $\Lambda V$ is a $2^n$ dimensional vector space with basis $\{ e_A : A \subset [n] \}$ and an associative bilinear operation $\wedge$. For $A = \{i_1,\ldots,i_r \}, i_1 < \cdots < i_r$, we identify
$$ e_A = e_{i_1} \wedge \cdots \wedge e_{i_r}. $$
The operation $\wedge$ is defined to satisfy $e_i \wedge e_j = -e_j \wedge e_i$, and we extend by linearity. In particular, for a set of vectors $U = \{u_1, \ldots, u_m\} \subset V$, the wedge product $u_1 \wedge \cdots \wedge u_m$ is non-zero if and only if $U$ is an independent set of vectors.

\begin{proof}[Proof of Theorem \ref{thm_crossintersecting}]
Without loss of generality, the coordinates of the points take values in $[m]$, i.e., $\{a^1, b^1, \ldots, a^t, b^t\} \subset [m]^d$.

We consider the exterior algebra over the real vector space $\real^{2d}$. Label a set of basis elements for $\real^{2d}$ as
$$ \{e_1,\ldots,e_d,f_1,\ldots,f_d \}. $$
For each $i$, let $\text{lin}(e_i,f_i)$ be the subspace of $\real^{2d}$ spanned by the vectors $\{e_i,f_i\}$, and let
$$ \{ v_i^j : 1 \leq j \leq m \} \subset \text{lin}(e_i,f_i) $$
be a set of $m$ vectors in general position, i.e., any 2 of them are linearly independent. For $x \in [m]^d$, let $v_x$ be the vector
$$ v_x = \bigwedge_{i=1}^d v_i^{x_i}. $$
Then for $x,y \in [m]^d$, $v_x \wedge v_y = 0$ if and only if $x$ and $y$ intersect. Hence,
\begin{align*}
v_{a_i} \wedge v_{a_j} &= 0 \text{ \ for all } i,j, \\
v_{b_i} \wedge v_{b_j} &= 0 \text{ \ for all } i,j, \\
v_{a_i} \wedge v_{b_j} &= 0 \text{ \ if and only if } i \neq j.
\end{align*}
We now show that the vectors $\{ v_{a_j}, v_{b_j} : 1 \leq j \leq t \}$ are linearly independent. Suppose for some constants $\alpha_j, \beta_j$ we have that
$$ \sum_{j=1}^t \alpha_j v_{a_j} + \sum_{j=1}^t \beta_j v_{b_j} = 0. $$
For given $k$, the wedge product of the left hand side of this expression with $v_{a_k}$ is $\beta_k v_{b_k} \wedge v_{a_k}$. Since $v_{b_k} \wedge v_{a_k} \neq 0$, we must have $\beta_k = 0$. This is true for all $\alpha_k$ and $\beta_k$. This shows linear independence.

The vectors $v_{a_j}, v_{b_j}$ lie in the vector space spanned by the $2^d$ vectors of the form $x_1 \wedge \cdots \wedge x_d$, where $x_i \in \{e_i,f_i\}$. Thus by linear independence, we have $2t \leq 2^d$, i.e., $t \leq 2^{d-1}$.
\end{proof}

Here are two technical lemmas that we will need in the proof of Theorem \ref{thm_strict_set_bound}. Lemma \ref{minantichainsize} is also an immediate corollary of Dilworth's theorem.

\begin{intersectinggridsubset}
 \label{intersectinggridsubset}
 Let $m,d$ be integers, and let $A \subset [m]^d$ be intersecting (i.e., for all $x,y \in A$ there exists $i \in [d]$ such that $x_i = y_i$). Then $|A| \leq m^{d-1}$.
\end{intersectinggridsubset}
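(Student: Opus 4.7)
\emph{Proposal.} The plan is to partition the hypercube $[m]^d$ into $m^{d-1}$ ``conflict cliques'' of size $m$, where a conflict clique is a set whose members pairwise disagree in \emph{every} coordinate. An intersecting set $A$ can then contain at most one point from each conflict clique, so $|A| \leq m^{d-1}$ follows by summing over the parts.

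For the construction I identify $[m]$ with $\mathbb{Z}/m\mathbb{Z}$. For each tuple $(y_2, \ldots, y_d) \in [m]^{d-1}$, define
\[
C_{y_2, \ldots, y_d} = \{(k,\ k + y_2,\ k + y_3,\ \ldots,\ k + y_d) : k \in [m]\},
\]
with all additions taken modulo $m$. Two distinct elements of $C_{y_2, \ldots, y_d}$ come from distinct values of $k$ and hence differ in every coordinate, confirming the conflict-clique property. The map $x \mapsto (x_2 - x_1, \ldots, x_d - x_1)$ sends each $x \in [m]^d$ to the unique tuple $(y_2, \ldots, y_d)$ for which $x \in C_{y_2, \ldots, y_d}$ (with $k = x_1$), so these $m^{d-1}$ cliques partition $[m]^d$. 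Intersectingness of $A$ then gives $|A \cap C_{y_2, \ldots, y_d}| \leq 1$ for each part, and the $m^{d-1}$ bound is immediate.

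I do not anticipate a substantive obstacle. The key observation is that the target bound $m^{d-1} = m^d / m$ suggests covering $[m]^d$ by $m^{d-1}$ cliques of size $m$ in the ``disagree-everywhere'' graph; the natural choice of parallel diagonals (in the direction $(1,1,\ldots,1)$ modulo $m$) both witnesses the partition and makes the conflict-clique property transparent. This is much more elementary than the exterior-algebra argument required for Theorem \ref{thm_crossintersecting}, because here we need only block \emph{all} non-intersecting pairs rather than track a matching of specific pairs.
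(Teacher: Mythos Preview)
Your proposal is correct and is essentially the same argument as the paper's: both partition $[m]^d$ into the $m^{d-1}$ cosets of the diagonal line $\{(k,\ldots,k):k\in\mathbb{Z}/m\mathbb{Z}\}$ and observe that an intersecting set meets each coset in at most one point. The paper phrases this as ``$\mathbb{F}_m^d$ partitioned into translates of the diagonal,'' while you give the explicit parametrization by $(y_2,\ldots,y_d)=(x_2-x_1,\ldots,x_d-x_1)$; your use of $\mathbb{Z}/m\mathbb{Z}$ rather than $\mathbb{F}_m$ is in fact slightly cleaner, since only the additive group structure is needed.
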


\begin{proof}
 Consider $[m]^d$ as the finite vector space $\mathbb{F}_m^d$. Partition $\mathbb{F}_m^d$ into the sets
$$\{(0,\ldots,0),(1,\ldots,1),\ldots,(m-1,\ldots,m-1)\} \subset \mathbb{F}_m^d $$
and all translates. There are $m^{d-1}$ such sets, and any intersecting subset of $\mathbb{F}_m^d$ takes at most one point from each set.
\end{proof}

\begin{minantichainsize}
 \label{minantichainsize}
Let $S$ be a partially ordered finite set, with maximum chain length $\ell$. Then $S$ contains an antichain $A$ of size at least $|A| \geq |S| / \ell$.
\end{minantichainsize}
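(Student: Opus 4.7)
The plan is to use a height-function argument (essentially the easy direction of Mirsky's theorem) and then apply pigeonhole. For each $x \in S$, I would define $h(x)$ to be the length of the longest chain in $S$ whose maximum element is $x$. Since the maximum chain length in $S$ is $\ell$, the function $h$ takes values in $\{1, 2, \ldots, \ell\}$.

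The key observation is that $h$ is strictly increasing along the partial order: if $x < y$, then any chain of length $h(x)$ ending at $x$ can be extended by appending $y$, yielding a chain of length $h(x)+1$ ending at $y$, so $h(y) \geq h(x)+1 > h(x)$. Consequently, for each $k \in \{1,\ldots,\ell\}$, the level set $A_k = h^{-1}(k)$ is an antichain, since any two comparable elements would receive different $h$-values.

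Finally, the level sets $A_1, \ldots, A_\ell$ partition $S$ into $\ell$ antichains, so by pigeonhole at least one of them has size at least $|S|/\ell$. This antichain is the desired $A$. There is no real obstacle here; the only step worth a moment's thought is the monotonicity of $h$, which is immediate from the definition.
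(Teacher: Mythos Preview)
Your proof is correct and is essentially the same argument as the paper's: both partition $S$ into $\ell$ antichains and apply pigeonhole. The paper obtains the partition by repeatedly stripping off the set of maximal elements (an inductive formulation), whereas you obtain it directly via the height function; these are dual presentations of the same Mirsky-type decomposition.
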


\begin{proof}
The set $T$ of maximal elements in $S$ is an antichain, and the set $S \setminus T$ has maximal chain length $\ell-1$. Proceeding by induction on $\ell$ with the set $S \setminus T$ gives a partition of $S$ into $\ell$ antichains, one of which has size at least $|S| / \ell$.
\end{proof}

\subsection{Intersecting flats}

We consider \emph{axis-aligned affine subspaces} in $\real^n$, which we will refer to as \emph{flats}. Label such a flat $u$ via a string of $n$ numbers and $\star$s, say, $u \in (\real \cup \{\star\})^n$, where $\star$ is a wildcard. Thus a point $x \in \real^n$ lies in $u$ if for all $i \in [n]$, either $x_i = u_i$ or $u_i = \star$. For example, $(2,1,\star)$ represents a line in $\real^3$ parallel to the $z$-axis through the point $(2,1,0)$. Observe that the dimension of the flat is the number of $\star$ coordinates.

\begin{defnintersectingflats}
 A set of flats $W \subset (\real \cup \{\star\})^n$ is \emph{intersecting} if for all $u,v \in W$ we have $u_i = v_i \neq \star$ for some $i \in [n]$.
\end{defnintersectingflats}

Equivalently, a set of flats $W$ is intersecting if any pair of points taken from a single flat or a pair of flats in $W$ intersect.

\begin{defnminimalflats}
 \label{defnminimalflats}
An intersecting set of flats $W \subset (\real \cup \{\star\})^n$ is \emph{minimal} if no flats in $W$ can be enlarged (by replacing a coordinate with a $\star$) while $W$ remains intersecting.
\end{defnminimalflats}

Equivalently, $W$ is intersecting and minimal if $W$ is intersecting and further for all $u \in W$ and all $i \in [n]$ with $u_i \ne \star$, there is some $v \in W$ such that $u_i = v_i$ and for $j \ne i$, either $u_j \ne v_j$ or $u_j = v_j = \star$.

Given a set of pairwise intersecting points $V \subset \real^n$, we can construct an intersecting and minimal set of flats containing all the points of $V$ as follows. Initially, let $W = V$, considering $W$ as an intersecting set of flats (each of dimension $0$). Either $W$ is minimal, or we can enlarge one of the flats by replacing a coordinate with a $\star$, taking only one copy if this produces a duplicate flat. Continue in this manner until no further enlargements can be made.

The set of 3 flats listed below is an example of such an intersecting and minimal system in $\real^4$.
\begin{center}
\begin{tabular}{llll}
1 & $\star$ & 1 & $\star$\\
$\star$ & 0 & 1 & $\star$\\
1 & 0 & $\star$ & $\star$
\end{tabular}
\end{center}

We can use Theorem \ref{thm_crossintersecting} to bound the number of non-$\star$ values appearing in each coordinate position for a minimal set of intersecting flats.

\begin{intersectingflats}
 \label{intersectingflats}
 Let $W$ be an intersecting and minimal set of flats in $\real^n$. Then for all $i \in [n]$,
$$ | \{ w_i : w_i \neq \star, w \in W \} | \leq 2^{n-2}. $$
\end{intersectingflats}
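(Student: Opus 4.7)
The plan is to fix a coordinate $i \in [n]$ and, using the minimality of $W$, associate to each non-$\star$ value $\alpha$ appearing at position $i$ a pair of flats in $W$ that intersect only at coordinate $i$. Projecting those pairs onto the remaining $n-1$ coordinates and converting the resulting flats to points will produce a configuration that exactly fits the hypotheses of Theorem \ref{thm_crossintersecting} in dimension $d = n-1$, yielding the desired bound $|V_i| \leq 2^{n-2}$, where $V_i = \{w_i : w \in W, w_i \neq \star\}$.

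Concretely, for each $\alpha \in V_i$ I pick any $u^\alpha \in W$ with $u^\alpha_i = \alpha$, and apply the equivalent formulation of minimality given right after Definition \ref{defnminimalflats} at the position $i$ to produce a witness $v^\alpha \in W$ with $v^\alpha_i = \alpha$ such that for every $j \neq i$, either $u^\alpha_j \neq v^\alpha_j$ or $u^\alpha_j = v^\alpha_j = \star$. The $2|V_i|$ flats $\{u^\alpha, v^\alpha : \alpha \in V_i\}$ are then pairwise distinct: distinctness across different $\alpha$'s is forced by their $i$-th coordinate values, and distinctness within a pair follows from the witness condition except in the degenerate case where $u^\alpha$ has $\star$ at every coordinate other than $i$; but in that case any $w \in W$ intersecting $u^\alpha$ must satisfy $w_i = \alpha$, so $V_i = \{\alpha\}$ and the bound holds trivially.

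Next, I project out coordinate $i$ to get flats $\tilde u^\alpha, \tilde v^\alpha$ in $(\real \cup \{\star\})^{n-1}$. Within a pair, $\tilde u^\alpha$ and $\tilde v^\alpha$ are non-intersecting by the witness property. Across pairs, $u^\alpha$ and $u^\beta$ (and the mixed combinations $u^\alpha, v^\beta$ and $v^\alpha, v^\beta$) must intersect in $W$, and cannot do so at coordinate $i$ since their $i$-th values disagree, so their projections intersect in $\real^{n-1}$. To pass from flats to points I then replace each $\star$ in each projected flat by a fresh real value, using globally distinct fresh values across all $\star$-slots of all the flats. This preserves every prior non-$\star$ agreement and creates no new ones, so the resulting $2|V_i|$ points $a^\alpha, b^\alpha \in \real^{n-1}$ satisfy the hypotheses of Theorem \ref{thm_crossintersecting} with $t = |V_i|$ and $d = n-1$, giving $|V_i| \leq 2^{(n-1)-1} = 2^{n-2}$.

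The main technical point to watch is that the flat-to-point conversion must not accidentally manufacture agreements between flats that were non-intersecting, which is exactly why \emph{distinct} fresh reals are used for every $\star$-slot; combined with the fact that non-$\star$ entries are copied verbatim, this guarantees that the intersection pattern is transferred from the projected flats to the points unchanged. The hyperplane degeneracy identified above is the only case-analysis subtlety, and the boundary case $n = 1$ is vacuous since an intersecting family in $\real^1$ shares a single common value, so $|V_1| \leq 1$.
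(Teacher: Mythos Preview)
Your proof is correct and follows essentially the same approach as the paper's: pick for each value $\alpha$ a flat $u^\alpha$ with $u^\alpha_i=\alpha$, use minimality to find a witness $v^\alpha$, project out coordinate $i$, convert the projected flats to points by filling in the $\star$s, and apply Theorem~\ref{thm_crossintersecting} in dimension $n-1$. The only differences are cosmetic: the paper fills every $\star$ in $u^j$ with one fixed value and every $\star$ in $v^j$ with another (implicitly assumed not to collide with existing entries), whereas you use globally fresh reals at every $\star$-slot; and you explicitly dispose of the degenerate hyperplane case $u^\alpha=(\star,\ldots,\alpha,\ldots,\star)$, which the paper leaves implicit.
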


\begin{proof}
Without loss of generality, we will bound the number of values occuring in the first coordinate. We may assume that the values occuring here are $[t]$. For each $j \in [t]$, there are $u^j, v^j \in W$ such that $u^j_1 = v^j_1 = j$, but either $u^j_i \neq v^j_i$ or $u^j_i = v^j_i = \star$ for $2 \leq i \leq n$. Project the last $n-1$ coordinates of $u,v$ to form $a^j, b^j \in \real^{n-1}$ respectively, where we replace $\star$s in $u$ with 1, and $\star$s in $v$ with 2. (There is nothing special about these two values other than that they are distinct.) Then $a^j$ and $b^j$ do not intersect. However, for $j \neq k$, $a^j,b^j$ must intersect $a^k,b^k$ since $W$ is intersecting and $u^j_1, v^j_1 \ne u^k_1, v^k_1$. The collection of pairs $\{a^1,b^1\},\ldots,\{a^t,b^t\}$ satisfies the conditions in Theorem \ref{thm_crossintersecting} and the result follows.
\end{proof}

\subsection{Proofs of the main theorems}

\begin{proof}[Proof of Theorem \ref{thm_strict_set_bound}]
We proceed by induction on $n$. The result is clearly true for $n=1$, so we may assume $n \geq 2$. Let $(\ell_d)_{d \in D_n}$, $(\lambda_I)_{I \subset [n]}$ be as in Theorem~\ref{thm_strict_set_bound}. Let $U \subset \real^n$ be a set of points containing no strictly monotonic sequence of length $\ell_d + 1$ in direction $d$, for all $d \in D_n$. We aim to show that $|U| \leq \lambda_{[n]}$.

Each direction $c \in C_n$ corresponds to a partial order on $U$, where we say $x < y$ if $(x,y)$ is a strictly monotonic sequence in direction $c$. The maximum chain length in the order corresponding to $c$ is $\ell_c$. Recursively construct a sequence of sets $U_0, \ldots, U_{2^{n-1}}$ as follows. Let $U_0 = U$. For $1 \leq i \leq 2^{n-1}$, partially order $U_{i-1}$ with $c^i$. Lemma \ref{minantichainsize} guarantees an antichain $U_i \subset U_{i-1}$ of size $|U_i| \geq |U_{i-1}| / \ell_{c^i}$. Set $V = U_{2^{n-1}}$. No two points in $V$ form a sequence in any of the directions $c \in C_n$, and so every pair of points agree in some coordinate ($V$ is pairwise intersecting). We have
$$ |V| \geq |U| / \prod_{c \in C_n} \ell_c = |U| / m_{[n]}. $$
It remains to show that
$$ |V| \leq \max_{i \in [n]} \lambda_{[n] \setminus \{i\}} = \lambda_{I_0} $$
(for some $I_0 \subset [n]$).

Define a function
$$ F : ( \real \cup \{\star\} )^n \rightarrow \mathcal{P}[n] $$
to give the coordinate positions of the $\star$-coordinates of a flat, i.e., $i \in F(w)$ if and only if $w_i = \star$.

Our inductive hypothesis tells us the following. Let $w \in (\real \cup \{\star\})^n$ be a flat and let $J = F(w) \subset [n]$ be the set of $\star$-coordinates of $w$. Let $X \subset V$ be the points of $V$ that lie in the flat $w$. Then $|X| \leq \lambda_{J}$. Indeed, let $t = |J|$ and let $\pi_J : \real^n \rightarrow \real^t$ project in the coordinate positions $J$. For $d \in D_t$, let $g(d) \in D_n$ be the direction such that $\pi_J(g(d)) = d$ and $g(d)_i = 0$ for $i \not\in J$. Define the set of lengths $(\ell_d')_{d \in D_t}$ via
$$\ell_d' = \ell_{g(d)}.$$
This gives a collection $(\lambda_I')_{I \subset [t]}$ with $\lambda_{[t]}' = \lambda_J$. Then by the inductive hypothesis with this collection of lengths, $|X| = |\pi_J(X)| \leq \lambda_{[t]}' = \lambda_J$.

In particular, let $W$ be an intersecting and minimal set of flats that contain all the points of $V$. Then
\begin{equation}
 \label{eqn_sum_of_lambda}
 |V| \leq \sum_{w \in W} \lambda_{F(w)}.
\end{equation}
It is sufficient to show that the right hand side of (\ref{eqn_sum_of_lambda}) is at most $\lambda_{I_0}$.
If all the points of $V$ lie in a flat of dimension $n-1$, i.e., all points share a single fixed coordinate position $i_0$, then $|W| = 1$ and we are done by the maximality of $\lambda_{I_0}$. So we will assume from now on that there is no single fixed coordinate, and all the flats have dimension at most $n-2$.

Let $J \subset [n]$ be a set of $|J| = s$ coordinate positions. Take a chain of subsets $J = J_s \subset J_{s+1} \subset \cdots \subset J_{n-1} \subset [n]$ with $|J_t| = t$. Since $\ell_d \geq 2$ for all $d \in D_n$, we have that $m_{J_t} \geq 2^{2^{t-1}}$ and hence
\begin{equation}
\label{lambdaforellequals2}
 \lambda_{I_0} \geq
\lambda_{J_{n-1}} \geq
\lambda_{J_{n-2}} 2^{2^{n-2}} \geq
\cdots \geq
\lambda_{J} 2^{2^{n-2} + 2^{n-3} + \cdots + 2^s} =
\lambda_{J} 2^{2^{n-1}-2^s}.
\end{equation}
Thus $\lambda_{J} / \lambda_{I_0} \leq 2^{2^s-1} / 2^{2^{n-1}-1}$ ($2^{2^s-1}$ is the value of $\lambda_{J}$ when $\ell_d = 2$ for all $d$). Thus from (\ref{eqn_sum_of_lambda}), it is sufficient to prove the inequality
\begin{equation}
 \label{eqn_sum_of_lambda_2}
f(W) := \sum_{w \in W} 2^{2^{|F(w)|}-1} \leq 2^{2^{n-1}-1}.
\end{equation}

We consider the specific cases $2 \leq n \leq 4$, before proving (\ref{eqn_sum_of_lambda_2}) for general $n \geq 5$. The cases of small $n$ are necessary to consider as we are inducting on $n$.

\paragraph {$n=2$.} By assumption $W$ consists only of flats of dimension $n-2$, i.e., points. By Lemma \ref{intersectingflats}, at most one value appears in each coordinate. However $W$ is minimal (Definition \ref{defnminimalflats}), so there are at least two values appearing in each coordinate. This contradiction implies that $W$ cannot exist (the only valid $W$ is one containing a single 1-dimensional flat).

\paragraph {$n=3$.} We aim to show that $f(W) \leq 2^{2^2-1} = 8$. By Lemma \ref{intersectingflats}, there are at most 2 non-$\star$ values used in each coordinate in $W$, say, $\{1,2\}$. If all flats have dimension 0, then $f(W) = |W| \leq 2^3$ and we are done.

Otherwise, say, $W$ contains the flat $(1,1,\star)$. By minimality, $W$ must contain a flat of the form $(1,a,c)$ and a flat of the form $(b,1,d)$, with $a,b \ne 1$, and no other forms are possible. Since $W$ is intersecting, $c=d \in \{1,2\}$. Further by minimality, $a = b = \star$. Thus $f(W) = 2|W| = 6$.

\paragraph {$n=4$.} We aim to show that $f(W) \leq 2^{2^3-1} = 128$. By Lemma \ref{intersectingflats}, there are at most 4 non-$\star$ values used in each coordinate in $W$, say, $\{1,2,3,4\}$.

Suppose first that there are only flats of dimension 0 or 1. Construct a bipartite graph $G$ with vertex classes $(X,Y)$ as follows. Let $X \subset \{1,2,3,4,\star\}^4$ be the set of all flats with exactly one $\star$ coordinate. Let $Y = \{1,2,3,4\}^4$. Add an edge between $x \in X$ and $y \in Y$ if $x$ and $y$ differ only in the unique $\star$-coordinate of $x$. $G$ is thus a 4-regular bipartite graph. The set of flats $W$ corresponds to an independent set of vertices in $G$. By Hall's Theorem, $G$ has a matching of size $|W|$, whose set of end vertices in $Y$ is an intersecting subset of $\{1,2,3,4\}^4$. This has maximum size $4^3$ by Lemma \ref{intersectinggridsubset}, hence $|W| \leq 4^3$ and $f(W) \leq 2|W| \leq 128$ as required.

So suppose there is a flat of dimension 2, say, $(1,1,\star,\star)$.
Similarly to the case $n=3$, by minimality $W$ must contain flats of the form $(1,a,c,d)$ and $(b,1,e,f)$ with $a,b \ne 1$, and no other flats are possible. Hence further by minimality, $a=b=\star$.
Denote the set of flats of the first form by $W_1 \subset W$ and those of the second form by $W_2 \subset W$, i.e.,
\begin{align*}
 W_1 &= \{ (1,\star,a,b) : a,b \in \{1,2,3,4,\star\} \} \cap W, \\
 W_2 &= \{ (\star,1,a,b) : a,b \in \{1,2,3,4,\star\} \} \cap W.
\end{align*}
So $W = \{(1,1,\star,\star)\} \cup W_1 \cup W_2$.
The number of 1-dimensional flats in each of $W_1$ and $W_2$ is at most $4^2$.
Suppose $(1,\star,a,\star)$ is a 2-dimensional flat in $W_1$. Then $a$ can take at most 1 value, since otherwise the flats of this form cannot all intersect $W_2$ ($W_2$ is non-empty). The same is true for flats of the form $(1,\star,\star,a)$, so there are at most two 2-dimensional flats in $W_1$, and similarly at most two 2-dimensional flats in $W_2$.
Thus
$$ f(W) \leq 2^{2^2-1} + 2 \cdot 4^2 \cdot 2 + 4 \cdot 2^{2^2-1} = 104 \leq 128. $$

\paragraph {$n\geq5$.}
Let $I \subset [n]$ be a set of size $|I| = s$. By Lemmas \ref{intersectinggridsubset} and \ref{intersectingflats} the number of flats $w \in W$ with $F(w) = I$ is at most $2^{(n-2)(n-s-1)}$. Thus
$$ f(W) \leq \sum_{s=0}^{n-2} \binom{n}{s} \cdot 2^{(n-2)(n-s-1)} \cdot 2^{2^s-1}. $$
It is easy to check that this quantity is at most $2^{2^{n-1}-1}$ for $n \geq 5$. Indeed one can calculate explicitly the values for $n=5,6,7$. For $n \geq 8$, bound the expression by $2^n \cdot 2^{(n-2)(n-1)} \cdot 2^{2^{n-2}-1}$. Looking at exponents, it is sufficient to show that $n^2 - 2n + 2 \leq 2^{n-2}$. This is true for $n=8$, and the right hand side grows faster than the left for all $n \geq 8$.
% alternative cute proof for the n=5 case: if all subspaces have codim \geq 3 then pick rep. sets 12,13,23,45 that intersect all the subspaces, only 4*8^2 possible values here, remaining ok, exactly \leq 2^15. if there are at most 3 spaces of codim 2 then one of two possible configns: 12***,1*3**,1**4*, or 12***,1*3**,*23** - using exactly these sets (together with e.g. 23 for the first one) covers all remaining sets as well. Otherwise, only remaining confign is of the form 12***,1*3**,1**4*,1***5,*2345 - can check this one explicitly.
% we can be more precise in the general formula, for example, replace the term \binom(n,n-s) with the max size of an n-s intersecting faimly, replace can just take a set of 2-sets and then use our result for n-2 sets on remaining coord values, etc...)
\end{proof}

\begin{proof}[Proof of Theorem \ref{thm_strict_seq_bound}]
Let $S$ be a sequence with no strictly monotonic subsequence of length $\ell_d + 1$ ($\ell_d \geq 2$) in direction $d$, for all $d \in \{-1,0,1\}^n$. Map it to a set $T \subset \real^{n+1}$ as described in Section \ref{secn_strict_monotonicity_in_sequences}. The proof of Theorem \ref{thm_strict_set_bound} is still valid for the set $T$, with the possible exception of inequality (\ref{lambdaforellequals2}). However, the only $J$ that we need to check (\ref{lambdaforellequals2}) for are those that occur as $J = F(w)$ for some $w \in W$, where $W$ is the intersecting and minimal set of flats that cover $V$. Indeed, let $w \in W$ and $J = F(w)$. Then we must have $w_1 = \star$ since $T$ contains no two points sharing the same first coordinate (the first coordinate is the index of the point in $S$), and hence $1 \in J$. Let $J \subset J_{s+1} \subset \cdots \subset J_{n-1} \subset [n]$ be a chain with $|J_t| = t$, as before. Then since $1 \in J_t$ for all $t$, we again have $m_{J_t} \geq 2^{2^{t-1}}$, and the inequality holds.
\end{proof}

\section{Pillage games}
\label{secn_pillage_games}

Theorem \ref{thm_strict_set_bound} was first motivated by an application to economic theory. We describe it here because it has some combinatorial interest.
Jordan \cite{initialjordan} introduced the concept of a pillage game. The set of \emph{players} in the pillage game is the set $[n] := \{ 1, \ldots, n \}$. A \emph{coalition} is a subset $I \subset [n]$ of the players. The set of \emph{allocations} is an $(n-1)$-dimensional simplex
$$ A = \{x \in \real^n : x_i \geq 0 \mbox{ for all } i \in [n] \mbox{ and } \sum_{i=1}^n x_i = 1 \}. $$
A \emph{distribution of wealth} of the players is a point $x \in A$.
The \emph{power function}, defined below, gives the strength of a coalition of players who have a certain distribution of wealth.

\begin{defnpowerfunction}
 The \emph{power function} is a map $\pi : \mathcal{P}[n] \times A \rightarrow \real$ satisfying
\begin{enumerate}[label=(p.1)]
\item if $C \subset C'$ then $\pi(C',x) \geq \pi(C,x)$ for all $x \in A$;
\item if $x_i' \geq x_i$ for all $i \in C$ then $\pi(C,x') \geq \pi(C,x)$; and
\item if $C \ne \emptyset$ and $x_i' > x_i$ for all $i \in C$ then $\pi(C,x') > \pi(C,x)$.
\end{enumerate}
\end{defnpowerfunction}

The above axioms thus specify monotonicity conditions: (p.1) says that the power of a coalition does not decrease if new members are added; (p.2) says that the power of a coalition does not decrease if the wealth of some of the members is increased without decreasing the wealth of other members; (p.3) says that if the wealth of each member of a coalition is strictly increased then the power of the coalition must also strictly increase.

\subsection{Domination and stable sets}
\label{secn_domination_and_stable_sets}

\begin{defndomination}
 Let $x, x' \in A$. Define the sets $W = \{i : x_i' > x_i\}$ (winners) and $L = \{i : x_i' < x_i \}$ (losers). Then $x'$ \emph{dominates} $x$ if
$$ \pi(W,x) > \pi(L,x). $$
\end{defndomination}

The interpretation of this is that $x'$ dominates $x$ if the coalition of players whose wealth strictly increases in going from $x$ to $x'$ is more powerful than the coalition of players whose wealth strictly decreases in going from $x$ to $x'$, when the wealth distribution is $x$.

\begin{defnstable}
 A set $S \subset A$ is a \emph{stable set} if it satifies
\begin{itemize}
\item (internal stability) no element of $S$ is dominated by any other element of $S$; and
\item (external stability) each element of $A \setminus S$ is dominated by some element of $S$.
\end{itemize}
\end{defnstable}

Note that a stable set need not exist.

Jordan \cite{initialjordan} makes the following observation. Let $S \subset A$ be a stable set. Let $\{x^1, x^2, x^3, x^4\} \subset S$ be four points in the stable set, and for $k=1,2,3$ define
\begin{align*}
 W_k & = \{ i : x_i^{k+1} > x_i^k \}, \\
 L_k & = \{ i : x_i^{k+1} < x_i^k \}.
\end{align*}
We have by internal stability of $S$ that
\begin{itemize}
 \item $\pi(W_{k-1},x^k) \geq \pi(L_{k-1},x^k)$ for $k=2,3,4$ since $x^k$ does not dominate $x^{k-1}$; and
 \item $\pi(W_k,x^k) \leq \pi(L_k,x^k)$ for $k=1,2,3$ since $x^k$ does not dominate $x^{k+1}$.
\end{itemize}
In particular we have that
\begin{align*}
\pi(W_1,x^2) &\geq \pi(L_1,x^2), &
\pi(W_2,x^2) &\leq \pi(L_2,x^2), \\
\pi(W_2,x^3) &\geq \pi(L_2,x^3), &
\pi(W_3,x^3) &\leq \pi(L_3,x^3).
\end{align*}
If $W_1 = W_2 = W_3 = W$ and $L_1 = L_2 = L_3 = L$ then
\begin{align*}
\pi(W,x^2) &= \pi(L,x^2), &
\pi(W,x^3) &= \pi(L,x^3).
\end{align*}
However this violates axiom (p.3) of the power function, since we must have $\pi(W,x^2) < \pi(W,x^3)$ and $\pi(L,x^2) > \pi(L,x^3)$.
Therefore, given four points in a stable set, we cannot have $W_1 = W_2 = W_3$ and $L_1 = L_2 = L_3$.
More generally, it can be seen that we cannot have $W_1 \subset W_2 \subset W_3$ and $L_1 \supset L_2 \supset L_3$ either \cite{kerberrowatpersonal}.
In his original paper \cite{initialjordan} Jordan observed that this implies that any stable set must be finite.

In fact, the situation with $W_1 = W_2 = W_3 = W$ and $L_1 = L_2 = L_3 = L$ is precisely the case that $(x^1,x^2,x^3,x^4)$ is a strictly monotonic sequence in direction $d$, where
$$ d_i =
\begin{cases}
 1 & \text{if } i \in W, \\
 -1 & \text{if } i \in L, \\
 0 & \text{otherwise.}
\end{cases}
$$
(Pairs of disjoint sets $(W,L)$ and directions $d \in \{-1,0,1\}^n$ can both be used to specify an equivalent direction.)
So a stable set cannot contain a strictly monotonic sequence of length 4.

In $A$ any direction $(W,L)$ that appears between a pair of points must have both $W \ne \emptyset$ and $L \ne \emptyset$. There are $c = (3^n-2^{n+1}+1)/2$ pairs $\{W,L\}$ with $W,L \subset [n]$, $W \cap L = \emptyset$, and $W \neq \emptyset \neq L$. Kerber and Rowat \cite{ramseybound} used Ramsey theory to bound the maximum size of a stable set $S$, as follows. Consider the complete graph $K_{|S|}$ on $|S|$ vertices. Associate with each of the $c$ possible directions in $S$ a colour $i$, $1 \leq i \leq c$. Colour the edge $xy$, $x,y \in S$ with the colour associated with the direction of $(x,y)$ or $(y,x)$. Then $|S| < R_c(4)$, the maximum size of a complete graph coloured with $c$ colours with no monochromatic clique of size 4. Indeed, such a monochomatic clique would give a strictly monotonic sequence of length 4, which cannot exist by the observation above.

We have the bounds $3^c < R_c(4) \leq c^{2c+1}$ (the lower bound is a product construction that can be found in \cite{ramsey_lower_bound}. The upper bound follows from a pigeonhole argument; see for example \cite{ramsey_upper_bound}). The upper bound on $S$ can be improved if we make use of the fact that the colours in the graph are induced by the directions of a set of points in $\real^n$. Indeed, by Theorem \ref{thm_strict_set_bound} with $\ell=3$, we immediately have the following.

\begin{stableupperbound}
 \label{stableupperbound}
Let $S \subset A$ be a stable set for $n$ players. Then
$$ |S| \leq 3^{2^n-1}. $$
\end{stableupperbound}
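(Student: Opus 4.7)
The plan is to apply the ``in particular'' form of Theorem~\ref{thm_strict_set_bound} directly, after verifying that the obstruction noted in the preceding paragraphs gives exactly the forbidden-sequence hypothesis the theorem needs.

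First I would restate the observation made just before the theorem as a clean claim: no stable set $S \subset A$ can contain a strictly monotonic sequence of length $4$. The reason is already in hand: if $(x^1,x^2,x^3,x^4) \subset S$ is strictly monotonic in some direction $d \in \{-1,0,1\}^n$, then by the definition of strict monotonicity the sign pattern is constant, so the winners/losers sets satisfy $W_1=W_2=W_3=W$ and $L_1=L_2=L_3=L$. The internal-stability chain of inequalities displayed before the theorem then forces $\pi(W,x^2)=\pi(L,x^2)$ and $\pi(W,x^3)=\pi(L,x^3)$, while axiom (p.3) together with $x^2_i < x^3_i$ for $i \in W$ and $x^2_i > x^3_i$ for $i \in L$ gives $\pi(W,x^2)<\pi(W,x^3)$ and $\pi(L,x^2)>\pi(L,x^3)$, a contradiction. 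Note that since the points lie in the simplex $A$, any two distinct points have both a coordinate that strictly increases and one that strictly decreases, so $W$ and $L$ are automatically non-empty, and (p.3) genuinely applies.

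Second, I would simply invoke Theorem~\ref{thm_strict_set_bound} in its uniform-$\ell$ form with $\ell=3$: if $|S| > 3^{2^n-1}$, then $S$ contains a strictly monotonic sequence of length $\ell+1=4$. Combining with the previous paragraph by contrapositive yields $|S| \leq 3^{2^n-1}$, as required.

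There is essentially no technical obstacle here, since the heavy lifting is done by Theorem~\ref{thm_strict_set_bound}. The only point requiring care is the one flagged above, namely checking that the direction $d$ arising from a strictly monotonic quadruple in $A$ does lie in the range of directions where the stability argument bites (i.e.\ both $W$ and $L$ non-empty), which is automatic from the simplex constraint $\sum_i x_i = 1$.
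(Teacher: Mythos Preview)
Your proposal is correct and follows exactly the paper's approach: the paper simply says ``by Theorem~\ref{thm_strict_set_bound} with $\ell=3$, we immediately have'' the result, relying on the preceding discussion that a stable set contains no strictly monotonic sequence of length $4$. Your write-up is a slightly more careful rendition of the same argument, including the useful explicit check that $W,L\neq\emptyset$ on the simplex so that axiom~(p.3) applies.
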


In fact with a little more work one can tighten this bound to $3^{2^n-n-2} \cdot 2^n + n$. We demonstrate some of these ideas for $n=3$ in the next theorem.

\begin{stableupperbound3}
 \label{stableupperbound3}
Let $S \subset A$ be a stable set for $3$ players. Then
$$ |S| \leq 27. $$
\end{stableupperbound3}

Kerber and Rowat \cite{threeagentpillage} have a tighter (in fact, best possible) bound of 15 under certain niceness conditions on the power function. Here we do not make any assumptions on the power function beyond the three axioms given.

\subsection{Towards constructing large stable sets}

Theorem \ref{stableupperbound} gives a super-exponential upper bound for the size of a stable set. In fact, the proof only makes use of internal stability. The next theorem gives a subset $S \subset A$ of a similar super-exponential size together with a power function defined on $S$ which makes $S$ into an internally stable set. We have not been able to determine whether or not the power function could be extended to one defined on all of $A$, while making $S$ into an externally stable set as well. However, this theorem at least shows that we cannot hope to improve substantially on the bound in Theorem \ref{stableupperbound} by considering internal stability alone.

\begin{internallystableconstr}
 \label{internallystableconstr}
Let $n$ be given, and let $d = \lfloor (n-1) / 2 \rfloor$. There exists a set $S \subset A$ of size
$$ |S| = 3^{\frac{1}{2} \binom{2d}{d}} $$
together with a function $\pi : \mathcal{P}[n] \times S \rightarrow \real$ satisfying axioms (p.1), (p.2), (p.3) that makes $S$ an internally stable set.
\end{internallystableconstr}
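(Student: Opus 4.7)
My plan is to exhibit $S$ as a product of $M=\tfrac12\binom{2d}{d}$ ternary perturbations of the barycenter of $A$, indexed by the antipodal pairs of the middle antichain of $\{0,1\}^{2d}$, and to define a power function piecewise: with tailored values on each ``balanced'' coalition $B_k,\bar{B}_k$ enforcing internal stability, and the standard coordinate-sum on everything else giving the monotonicity axioms. I restrict to the first $2d$ coordinates of $[n]$, group the $\binom{2d}{d}$ weight-$d$ subsets of $[2d]$ into $M$ complementary pairs $\{B_k,\bar{B}_k\}$, and set $u_k:=\mathbf{1}_{B_k}-\mathbf{1}_{\bar{B}_k}\in\{-1,0,+1\}^n$ (zero on coordinates beyond $2d$), so $\sum_i(u_k)_i=0$ and $u_k$ is parallel to $A$. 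Picking $\epsilon_1\gg\epsilon_2\gg\cdots\gg\epsilon_M>0$ with a sharp hierarchy, I set
\[ x^c:=\tfrac{1}{n}\mathbf{1}+\sum_{k=1}^{M}\epsilon_k c_k u_k, \qquad S:=\{x^c:c\in\{-1,0,+1\}^M\}, \]
so $|S|=3^M$, and the hierarchy forces the sign pattern of $x^{c'}-x^c$ to agree with $\pm u_{k_0}$ where $k_0:=\min\{k:c_k\ne c'_k\}$; consequently every pair in $S$ has winner/loser sets $(B_{k_0},\bar{B}_{k_0})$ (or swapped), each of size $d$.

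With $N$ very large and $\epsilon_2>0$ much smaller than $\epsilon_1$, I then define
\[ \pi(C,x^c):=\begin{cases} Nd+\epsilon_1 c_k & \text{if }C=B_k,\\ Nd-\epsilon_1 c_k & \text{if }C=\bar{B}_k,\\ N|C|+\epsilon_2\sum_{i\in C}x^c_i & \text{otherwise.}\end{cases} \]
A direct calculation gives $\pi(B_{k_0},x^c)-\pi(\bar{B}_{k_0},x^c)=2\epsilon_1 c_{k_0}$ and the analogue at $x^{c'}$; since $c_{k_0}<c'_{k_0}$ forces $c_{k_0}\le 0\le c'_{k_0}$, both signs match, so neither point dominates the other and $S$ is internally stable.

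The main obstacle is verifying axioms (p.1)--(p.3) across the piecewise definition of $\pi$. Axiom (p.1) is immediate once $N$ dominates the perturbations, and (p.2), (p.3) on non-special coalitions follow from the strict coordinate-wise monotonicity of $\sum_{i\in C}x^c_i$. The delicate case is (p.3) at a balanced $C=B_k$, where one must argue that the premise ``$x^{c'}_i>x^c_i$ for every $i\in B_k$'' forces $k=k_0$ and $c'_k>c_k$. This rests on the elementary observation that two distinct weight-$d$ subsets of $[2d]$ cannot satisfy a strict containment; once established, the piecewise value $Nd+\epsilon_1 c_k$ is strictly monotone in $c_k$, supplying the required strict increase $\epsilon_1(c'_k-c_k)>0$. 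Assembling these pieces yields $|S|=3^{\binom{2d}{d}/2}$ as claimed.
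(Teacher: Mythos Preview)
Your argument is correct and is essentially the paper's own proof in different packaging: your product set $\{x^c:c\in\{-1,0,1\}^M\}$ with the hierarchy $\epsilon_1\gg\cdots\gg\epsilon_M$ is exactly the recursive construction of Section~\ref{secn_nonstrict_constr} applied with $\ell_i=3$ on the balanced directions and $\ell_i=1$ elsewhere, and on a balanced coalition $B_k$ your value $Nd+\epsilon_1 c_k$ is an affine reparametrisation of the paper's chain-height $q(B_k,x)$ (one checks $q(B_k,x^c)=c_k+2$). The only cosmetic differences are that the paper treats all coalitions with $|C\cap[2d]|=d$ as ``special'' rather than just the $B_k,\bar B_k$ themselves, and uses $\max_{i\in C}x_i$ with a size-dependent shift on the remaining coalitions where you use $N|C|+\epsilon_2\sum_{i\in C}x^c_i$; both choices satisfy (p.1)--(p.3) for the same trivial reasons.
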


\subsection{Proof of Theorem \ref{stableupperbound3}}

\begin{proof}
Let $S \subset A$ be a stable set. Let $S' \subset S$ be $S$ minus any corner points that it may contain, i.e.,
$$ S' = S \setminus \{ (1,0,0), (0,1,0), (0,0,1) \}. $$

Suppose, say, that $c = (0,1,0) \not \in S$. Then by external stability, $c$ is dominated by some point $x \in S$, where $x_1,x_3 \geq 0$ and $x_2 < 1$. Let $F \subset \{1,3\}$, $F \ne \emptyset$ be such that $x_i > 0$ for $i \in F$. Then by the definition of domination,
\begin{equation}
 \label{pi_for_corner}
 \pi(F, c) > \pi(\{2\}, c).
\end{equation}
This condition then forbids any pair of points forming a sequence in direction $(1,-1,1)$ in $S'$. Indeed, suppose $y,z \in S'$ with $(y,z)$ such a sequence.
By axioms (p.2) and (p.1) we have that
\begin{align*}
 \pi( \{1,3\}, y ) & \geq \pi( \{1,3\}, c ) \geq \pi( F, c ), \\
 \pi( \{2\}, y )   & \leq \pi( \{2\}, c ).
\end{align*}
But then by (\ref{pi_for_corner}) we must have that $\pi( \{1,3\}, y ) > \pi( \{2\}, y )$, i.e., $z$ dominates $y$, contradicting the internal stability of $S$.

If we do have the point $c = (0,1,0)$ in $S$ then there can be no 3 points in a sequence in $S'$ in the direction $(1,-1,1)$. Indeed, suppose $x,y,z$ formed a sequence in the direction $(1,-1,1)$, or equivalently, the direction $(\{1,3\},\{2\})$. Then $(c,x)$ is a sequence in the direction $(W,\{2\})$, where $W \subset \{1,3\}$. By the observation in Section \ref{secn_domination_and_stable_sets}, $S$ cannot then be a stable set.

So in each of the three directions $(1,1,-1),(1,-1,1),(-1,1,1),$ the maximum length of a sequence of points in $S'$ is either 1 or 2. As in the proof of Theorem \ref{thm_strict_set_bound} we can find a subset $|S''| \geq |S'| / 2^3$ with no pair of points lying in any of these directions. This means that for all $x,y \in S''$ we have $x_i = y_i$ for some $i \in [3]$.

We now show that $|S''| \leq 3$. If all the points in $S''$ share some coordinate, say $x_1 = a$ for all $x \in S''$, then the points in $S''$ form a sequence in direction $(0,1,-1)$, and so $S'' \leq 3$. Otherwise, for some $a,b,c \in [0,1]$, $S''$ contains the points
$$
 ( a     , b , 1-a-b ),
 ( a     , c , 1-a-c ),
 ( a+c-b , b , 1-a-c ),
$$
with $b \ne c$. No further distinct points can be added that share a coordinate with each of the three points above. Thus again $|S''| \leq 3$.

Putting these results together, we have that
$$ |S| \leq 3 + |S'| \leq 3 + 2^3 |S''| \leq 27. $$
\end{proof}

\subsection{Proof of Theorem \ref{internallystableconstr}}

\begin{proof}
 We construct such a subset of $A \subset \real^n$.
For $c \in C_{2d}$, let $p(c)$ count the number of $1$-coordinates of $c$. Define a collection of lengths $(\ell_i)_{i=1}^{2^{2d-1}}$ via
$$ \ell_i = 
\begin{cases}
 3 & \text{if } p(c^i) = d, \\
 1 & \text{otherwise.}
\end{cases} $$
By the construction in Section \ref{secn_nonstrict_constr} there is a set $T \subset \real^{2d}$ of size $\prod_i \ell_i = 3^{\frac{1}{2} \binom{2d}{d}}$ with no sequence of length $\ell_i+1$ in direction $c^i$. Moreover, for every two distinct points $x,y \in T$, $x_i \ne y_i$ for all $i \in [2d]$. In particular, every pair of points in $T$ must lie in some direction $c \in C_{2d}$ with $p(c)=d$.

Scale and translate the set so that it lies in $[0,1/(2d)]^{2d} \subset \real^{2d}$. Then map it to a set $S$, $S \subset A \subset \real^n$, by adding either one or two last coordinates (depending on whether $n = 2d+1$ or $n = 2d + 2$) such that the sum of the coordinates of any point is $1$.

We now define a power function on $S$. Let $B \subset [2d]$ with $|B| = d$ and let $x \in S$, with $x$ the image of the point $x' \in T$. Partially order the points of $T$ such that $u < v$ if $u_i < v_i$ for all $i \in B$, so by the properties of $T$, $u_i > v_i$ for all $i \in [2d] \setminus B$. Note that $T$ contains no chain of length 4 by construction. Let $q(B,x)$ denote the furthest distance up a chain that $x'$ lies, i.e.,
$$ q(B,x) =
\begin{cases}
 3 & \text{if there are $u,v \in T$ with $u < v < x'$,} \\
 2 & \text{if otherwise there is some $u \in T$ with $u < x'$,} \\
 1 & \text{otherwise.}
\end{cases} $$
Now define $\pi : \mathcal{P}[n] \times S \rightarrow \real$ via
$$ \pi(C, x) =
\begin{cases}
 \max_{i \in C} x_i     & \text{if $|C \cap [2d]| < d$,} \\
 q(C \cap [2d],x)       & \text{if $|C \cap [2d]| = d$,} \\
 \max_{i \in C} x_i + 3 & \text{if $|C \cap [2d]| > d$}
\end{cases} $$
where we take $\max_{i \in \emptyset} x_i = 0$.

This satisfies axiom (p.1), since $0 \leq \max_{i \in C} x_i \leq 1$ and $1 \leq q(B,x) \leq 3$.

Axioms (p.2) and (p.3) are satisfied when $|C \cap [2d]| \ne d$, as can easily be checked. So let $C$ be such that $|C \cap [2d]| = d$. Let $x,y \in S$, $x \ne y$, with $x_i \leq y_i$ for all $i \in C$. Then in fact $x_i < y_i$ for all $i \in C \cap [2d]$, and so $x < y$ in the partial order given by $C$. Thus $q(C \cap [2d], x) < q(C \cap [2d], y)$. Thus axioms (p.2) and (p.3) are satisfied in this case as well.

It remains to check internal stability, i.e., that no point in $S$ dominates any other point of $S$. Let $x, y \in S$ be two distinct points, with $W = \{ i : y_i > x_i \}$ and $L = \{ i : y_i < x_i \}$, and let $x', y'$ be their original points in $T$. By construction of $T$, $|W \cap [2d]| = |L \cap [2d]| = d$. In the partial order on $T$ induced by $W \cap [2d]$, as described above, we have that $x' < y'$, and so $q(W \cap [2d], x) \leq 2$. Similarly $q(L \cap [2d], x) \geq 2$. Thus $y$ does not dominate $x$, and we are done.
\end{proof}

\paragraph{Acknowledgements} The author would like to thank Manfred Kerber and Colin Rowat for drawing attention to this question and Andrew Thomason for his helpful suggestions and comments.

\bibliographystyle{elsarticle-num}
\bibliography{bib}
\end{document}